\documentclass{article}
\usepackage{maa-monthly}

%% IF YOU HAVE FONTS INSTALLED
%\usepackage{mtpro2}
%\usepackage{mathtime}

\theoremstyle{theorem}
\newtheorem{theorem}{Theorem}
 
\theoremstyle{definition}

\begin{document}

\title{A Trajectory from a Vertex to Itself on the Dodecahedron}
\markright{Trajectory on Dodecahedron}
\author{Jayadev S. Athreya and David Aulicino}

\maketitle

\begin{abstract}
We prove that there exists a geodesic trajectory on the dodecahedron from a vertex to itself that does not pass through any other vertex.
\end{abstract}

A straight-line trajectory on a the surface of a polyhedron is a straight line within a face that is uniquely extended over an edge so that when the adjacent faces are flattened the trajectory forms a straight line in the plane.  This is well-defined away from the vertices.  By choosing a tangent vector at a vertex, one can consider the corresponding straight-line trajectory emanating from that vertex.

These trajectories were considered in \cite{DavisDodsTraubYangGeodsRegTetCube, FuchsFuchsClosedGeodsRegPoly, FuchsGeodesicsRegPoly} where it was proven that there does not exist a straight-line trajectory on the tetrahedron, octahedron, cube, or icosahedron from a vertex to itself that does not pass through a different vertex before returning.  Fuchs \cite{FuchsGeodesicsRegPoly} speculated that such a trajectory might exist on the dodecahedron.

\begin{theorem}
\label{MainThm}
There exists a straight-line trajectory on the dodecahedron from a vertex to itself that does not pass through any other vertex.
\end{theorem}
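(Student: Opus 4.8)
The plan is to lift the question to a translation surface and exploit the Veech dichotomy.

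\textbf{Step 1: unfolding.} At each of the $20$ vertices of the dodecahedron three regular pentagons meet, so the cone angle is $3\cdot 108^\circ = 324^\circ \equiv -36^\circ \pmod{360^\circ}$; hence the rotational part of the holonomy of the flat structure generates the cyclic group of order $10$, and there is a well-defined degree-$10$ translation cover $p\colon X\to(\text{dodecahedron})$ killing it. A Riemann--Hurwitz count then shows $X$ has genus $81$, with its abelian differential in the stratum $\mathcal H(8^{20})$: each vertex of the dodecahedron has a single (totally ramified) preimage, a cone point of cone angle $9\cdot 360^\circ$. Because $p$ is one-to-one over the vertex set, a straight-line trajectory on the dodecahedron from a vertex to itself missing all other vertices is precisely the same thing as a saddle connection on $X$ both of whose endpoints are the same cone point (the interior of a saddle connection avoids every cone point, and the cone points of $X$ are exactly the $20$ vertex preimages). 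So the theorem reduces to: \emph{$X$ carries a loop saddle connection.}

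\textbf{Step 2: the Veech property.} The surface $X$ is a lattice (Veech) surface: this is the content of the analysis of Platonic solids as covers of lattice surfaces by Athreya, Aulicino, and Hooper, in which $X$ is shown commensurable to a regular-pentagon lattice surface of Veech, whose Veech group is commensurable to the Hecke triangle group $H_5$. Granting this, the Veech dichotomy applies: in every direction on $X$ the linear flow is either uniquely ergodic or completely periodic, and in a completely periodic direction $X$ is a finite union of flat cylinders whose boundary circles are concatenations of saddle connections. It therefore suffices to exhibit a single completely periodic direction in which some boundary saddle connection is a loop at a cone point.

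\textbf{Step 3: the explicit direction.} Here I would work by hand with coordinates in $\mathbb Q(\varphi)$ ($\varphi$ the golden ratio), which are adapted to the geometry of the regular pentagon. Develop a suitable chain of pentagonal faces of $X$ into the plane, fix a candidate direction — a specific golden-ratio slope — and a candidate segment from one vertex to a developing-map translate of it, and then check, using the lifted edge identifications, that (i) the two endpoints are identified to a single cone point of $X$, and (ii) no vertex lies on the open segment, the second point reducing to the solvability of a linear equation over $\mathbb Q(\varphi)$. Equivalently, one may produce one explicit saddle connection and transport it by affine automorphisms of $X$ (the lift of the order-$60$ dodecahedral rotation group together with a parabolic coming from $H_5$) until some image closes up; either route is the same finite computation. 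The projection of the resulting loop is then the desired trajectory, and by construction its interior meets no vertex.

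\textbf{Main obstacle.} Steps 1 and 2 are soft. The real difficulty is the combinatorics of Step 3: tracking how the $120$ pentagons of $X$ are glued, determining which cone point each endpoint of the candidate segment lands on, and certifying that no vertex lies in its interior. The dodecahedral symmetry, lifted to affine automorphisms of $X$, cuts this bookkeeping down substantially, but it remains an explicit (and best done computer-assisted) finite check; and, as usual with the Veech dichotomy, the cleverness lies in picking the right direction in the first place.
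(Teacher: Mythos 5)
Your Steps 1 and 2 are a correct (and essentially standard) reduction — the cone-angle, covering-degree, and genus counts all check out — but they cannot carry the weight of the theorem, and Step 3, where all the content lives, is not actually carried out. The decisive observation is that the tetrahedron, cube, octahedron, and icosahedron also unfold to translation covers of lattice surfaces (square-tiled and triangle-tiled ones), so the Veech dichotomy and the cylinder-decomposition picture are available for them too; yet for those four solids the corresponding statement is \emph{false}, by the results of Fuchs--Fuchs and Davis--Dods--Traub--Yang cited in the paper. Hence no amount of soft structure theory can produce the trajectory: a completely periodic direction gives saddle connections on cylinder boundaries, but nothing forces any of them to begin and end at the same cone point. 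The existence claim lives entirely in the explicit example, and your proposal stops exactly at the point of exhibiting one — ``fix a candidate direction \dots and then check'' is a description of what a proof would do, not a proof.

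The paper's actual argument is far more elementary than your framework: it displays a specific net (a planar unfolding of a chain of faces of the dodecahedron), draws the straight segment joining two boundary points of the net that fold to the same vertex, and records the terminal point $\left( \frac 3 2 \sqrt{ \frac{5 - \sqrt 5}{2}} + 4\sqrt{ \frac{5 + \sqrt 5}{2}} ,\ \frac1 4 \left(-1 - \sqrt 5\right)\right)$ so that the reader can verify by a direct computation with pentagon coordinates that the segment stays inside the net, closes up at the starting vertex, and meets no other vertex. To complete your proof you would need to supply exactly this data — a direction, a developed chain of pentagons, and the incidence check — after which Steps 1 and 2 become unnecessary for the existence statement (they are the right tools for the harder classification problem, which is the subject of the authors' forthcoming work, not of this note). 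As written, your argument establishes only that the question is well posed on a genus-$81$ lattice surface, not that the answer is yes.
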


\begin{proof}
Cut out the net of the dodecahedron in Figure \ref{DodecNetFig}.  Fold and tape it together. The straight line (red) diagonal trajectory is the desired trajectory.  The resulting trajectory on the dodecahedron is seen in Figure \ref{DodecFig}.
\begin{figure}[h]
\centering
\includegraphics[scale=.08]{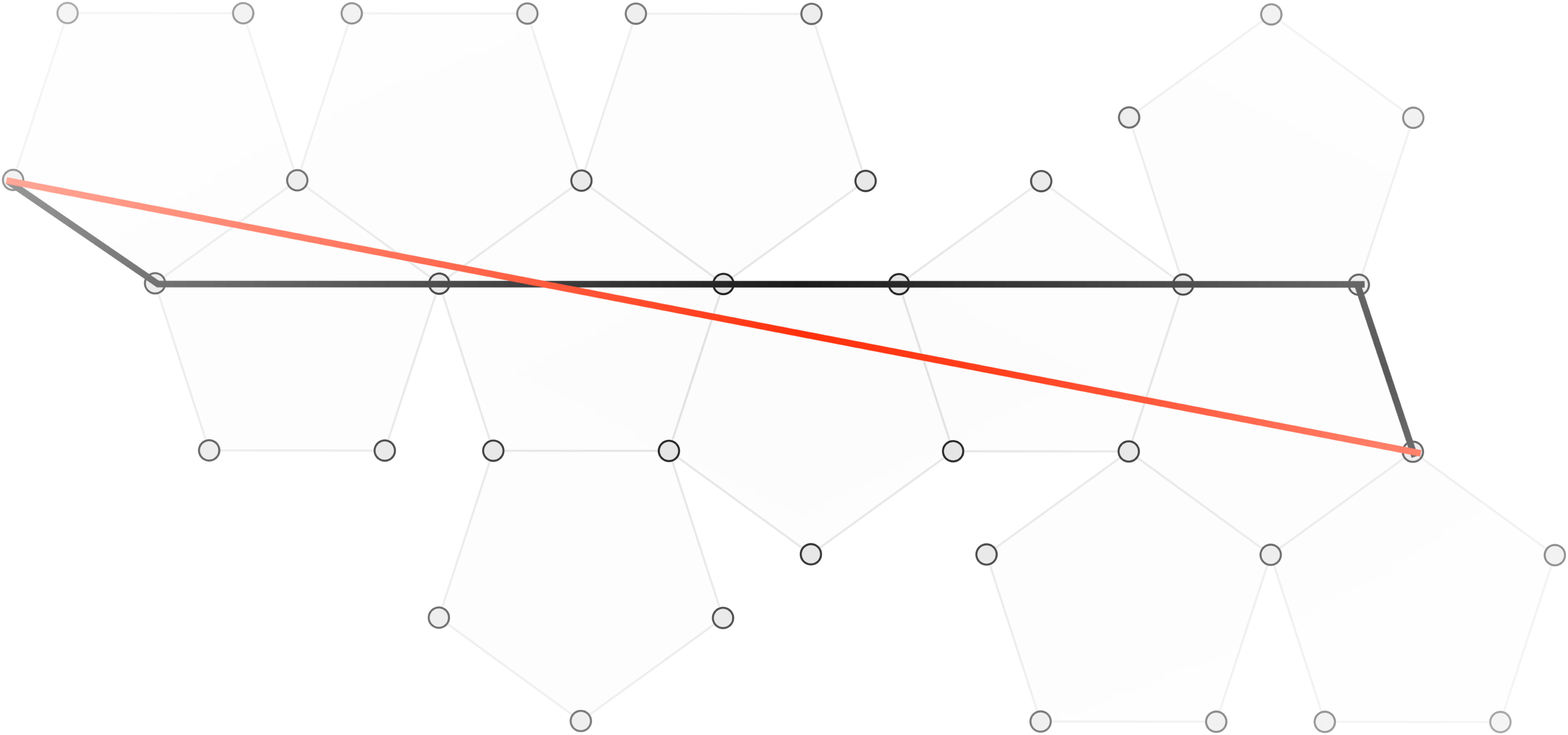}
\caption{A net of a dodecahedron with a straight-line trajectory.}
\label{DodecNetFig}
\end{figure}

\noindent If the starting point (the point on the left) is taken to be the origin, and the pentagons inscribed in a unit circle, then the terminal point (the point on the right) is $$\left( \frac 3 2 \sqrt{ \frac{5 - \sqrt 5}{2}} + 4\sqrt{ \frac{5 + \sqrt 5}{2}} ,  \frac1 4 \left(-1 - \sqrt 5\right)\right).$$ This can be seen via a direct calculation, most easily done by adding the vectors along the bold trajectory ($3$ horizontal diagonals, $2$ horizontal sides, and the initial and terminal sides).

\end{proof}

\begin{figure}[h]
\centering
\includegraphics[scale=.75]{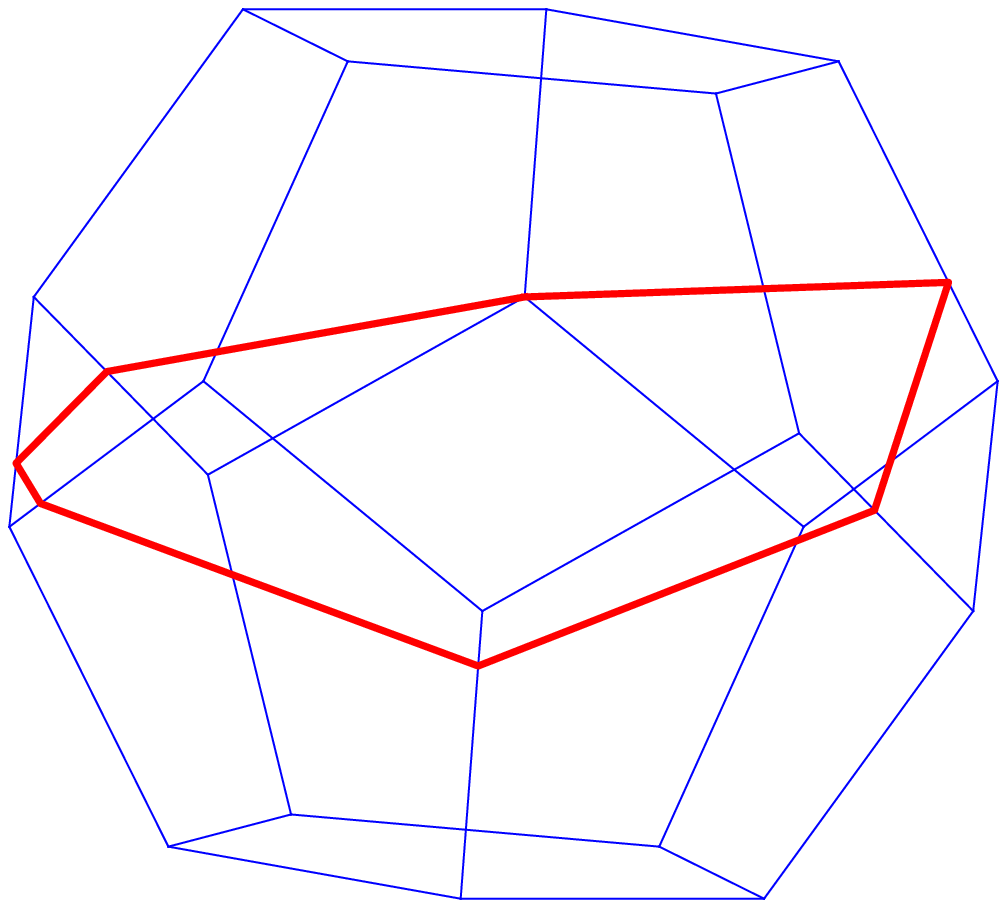}
\caption{A dodecahedron with a straight-line trajectory.}
\label{DodecFig}
\end{figure}

In forthcoming work, the authors use the theory of translation surfaces to give a uniform treatment for all Platonic solids and, in particular, classify all vertex-to-self trajectories on the dodecahedron.

\begin{acknowledgment}{Acknowledgments.} We would like to thank Samuel Leli\`evre for posing this problem to us.  We are very grateful to W. Patrick Hooper for many helpful discussions and for helping us generate Figure \ref{DodecFig} using the Sage-FlatSurf~\cite{SageFlatSurf} package.

J.S.A. gratefully acknowledges support from NSF CAREER grant DMS 1559860
and the D.A. from NSF grant DMS - 1738381 and PSC-CUNY Award \# 60571-00 48.
\end{acknowledgment}

\begin{biog}
\item[Jayadev S. Athreya]
\begin{affil}
Department of Mathematics, University of Washington, Box 354350, Seattle, WA, 98195-4530\\
jathreya@uw.edu
\end{affil}

\item[David Aulicino]
\begin{affil}
Department of Mathematics, Brooklyn College (CUNY), 2900 Bedford Avenue, Brooklyn, NY 11210-2889\\
david.aulicino@brooklyn.cuny.edu
\end{affil}
\end{biog}
\vfill\eject

\end{document}